\def\T{\mathbb T}
\def\upref#1{\textup{\ref{#1}}}
\def\q#1#2{#1/\negthickspace#2}
\newtheorem{lem}{Lemma}
\newtheorem{prp}{Proposition}
\newtheorem{thm}{Theorem}
\newtheorem{cor}{Corollary}
\def\<{\langle}
\def\>{\rangle}
\def\Z{\mathbb Z}
\def\R{\mathbb R}
\def\Power{\mathcal P}
\title[Extensions of invariant orders]{Linear extensions of orders invariant under abelian group actions}
\author{Alexander R. Pruss}
\thanks{I am grateful to David Arnold, Dietrich Burde, Ramiro de la Vega, Trent Dougherty, A. Paul Pedersen and Friedrich Wehrung for discussions.}
\email{alexander\_pruss@baylor.edu}
\address{Baylor University}
\begin{document}
\begin{abstract}
Let $G$ be an abelian group acting on a set $X$, and suppose that no element of $G$ has any finite orbit of size greater than one. We show that every partial order on $X$ invariant under $G$ extends to a linear order on $X$ also invariant under $G$. We then discuss extensions to linear preorders when the orbit condition is not met, and show that for any abelian group acting on a set $X$, there is a linear preorder $\le$ on the powerset $\Power X$ invariant under $G$ and such that if $A$ is a proper subset of $B$, then $A<B$ (i.e., $A\le B$ but not $B\le A$).
\end{abstract}

\maketitle
\section{Linear orders}
Szpilrajn's Theorem~\cite{Szpilrajn} (proved independently by a number of others) says that given the Axiom of Choice, any partial order can be extended to a linear order, where $\le^*$ extends $\le$ provided that $x\le y$ implies $x\le^* y$.  There has been much work on what properties of the partial order can be preserved in the linear order (e.g., \cite{BonnetPouzet,DHLS,Yang}) but the preservation of symmetry under a group acting on partially ordered set appears to have been neglected.

Suppose a group $G$ acts on a partially ordered set $(X,\le)$ and the order is $G$-invariant, where a relation $R$ is $G$-invariant provided that for all $g\in G$ and $x,y\in X$, we have $xRy$ if and only if $(gx)R(gy)$. It is natural to ask about the condition under which $\le$ extends to a $G$-invariant linear order.  We shall answer this question in the case where $G$ is abelian. Then we will discuss extensions where the condition is not met. In the latter case, the extension will be to a linear preorder (total, reflexive and transitive relation) but will nonetheless preserve strict comparisons. Finally, we will apply the results to show that for any abelian group $G$ acting on a set $X$, there is a $G$-invariant linear preorder on the powerset $\Power X$ preserving strict set inclusion.

Throughout the paper we will assume the Axiom of Choice and all our proofs will be elementary and self-contained.

An \textit{orbit} of $g\in G$ is any set of the form $\{ g^n x : n \in \Z \}$.
An obvious necessary condition for $X$ to have a $G$-invariant linear order is that no element of $G$ has
any finite orbit of size greater than $1$.  Surprisingly, this is sufficient not just for the existence of an invariant linear order, but for invariant partial orders to have invariant linear extensions.

\begin{thm}\label{th:ext0} Let $G$ be an abelian group. The following are equivalent:
\begin{enumerate}
\item[\textup{(i)}] No element of $G$ has any finite orbit of length greater than one
\item[\textup{(ii)}] There is a $G$-invariant linear order on $X$
\item[\textup{(iii)}] Every $G$-invariant partial order on $X$ extends to a $G$-invariant linear order.
\end{enumerate}
\end{thm}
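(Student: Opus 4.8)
The plan is to prove the cycle $(\text{iii})\Rightarrow(\text{ii})\Rightarrow(\text{i})\Rightarrow(\text{iii})$, with essentially all the work in the last implication. For $(\text{iii})\Rightarrow(\text{ii})$: the equality relation on $X$ is a $G$-invariant partial order, so by (iii) it extends to a $G$-invariant linear order. For $(\text{ii})\Rightarrow(\text{i})$: if $\le$ is a $G$-invariant linear order and some $g\in G$ has a finite orbit with $gx\ne x$, then by totality we may assume $x<gx$, hence $x<gx<g^2x<\cdots$ by invariance, and since the orbit is finite $g^nx=x$ for some $n\ge 1$, giving $x<x$. It is convenient to record that (i) is equivalent to the statement that for all $g\in G$, $x\in X$ and $n\ge 1$, $g^nx=x$ implies $gx=x$; equivalently, $G/\operatorname{Stab}(x)$ is torsion-free for every $x\in X$.

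For $(\text{i})\Rightarrow(\text{iii})$, let $\le$ be a given $G$-invariant partial order. Applying Zorn's Lemma to the family of $G$-invariant partial orders on $X$ extending $\le$, ordered by inclusion (the union of a chain of such orders is again one), I would fix a maximal such order $\preceq$ and show it is total; it is then the required extension. Suppose it is not total and pick $a\ne b$ with $a\not\preceq b$ and $b\not\preceq a$. The heart of the argument is the claim that at least one of the reflexive--transitive closures of $\preceq\cup\{(ga,gb):g\in G\}$ and of $\preceq\cup\{(gb,ga):g\in G\}$ is again a partial order; since each such closure is automatically reflexive, transitive, $G$-invariant and strictly larger than $\preceq$, this contradicts maximality.

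To establish the claim, the only thing that can go wrong is antisymmetry, and unwinding the transitive closure shows that the first one fails antisymmetry exactly when there are $k\ge 1$ and $h_1,\dots,h_k\in G$ with $h_1\cdots h_k=e$ and $b\preceq h_ia$ for all $i$ --- the obstruction being a cyclic chain through translates of the added pair, with the $h_i$ recording the successive shifts, so that their product telescopes to $e$ --- and symmetrically the second fails exactly when there are $m\ge 1$ and $f_1,\dots,f_m\in G$ with $f_1\cdots f_m=e$ and $a\preceq f_jb$ for all $j$. Assume for contradiction that both fail, and fix such $h_i$ and $f_j$. Applying the order-automorphism $h_i$ to $a\preceq f_jb$ and chaining with $b\preceq h_ia$ gives $b\preceq(h_if_j)b$ for all $i,j$, and since $G$ is abelian $\prod_{i,j}(h_if_j)=(\prod_i h_i)^m(\prod_j f_j)^k=e$. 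Here I would invoke the elementary fact that if $b\preceq g_\ell b$ for $\ell=1,\dots,N$ and $g_1\cdots g_N=e$, then every partial product fixes $b$: one walks up $b\preceq g_1b\preceq g_1g_2b\preceq\cdots\preceq g_1\cdots g_Nb=b$ and reads off equalities by antisymmetry. This forces $h_if_j\in\operatorname{Stab}(b)$ for all $i,j$, hence, again by commutativity, $\prod_i(h_if_1)=(\prod_i h_i)f_1^{\,k}=f_1^{\,k}\in\operatorname{Stab}(b)$, so $f_1^{\,k}b=b$; now condition (i) gives $f_1b=b$, whence $a\preceq f_1b=b$, contradicting $a\not\preceq b$.

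The main obstacle is precisely this last lemma, and within it the correct use of commutativity. Working with just one of the two failures is useless: it only produces vacuous inequalities of the form $a\preceq(\text{telescoping product})a=a$. The trick is to use both ``balanced'' families together so that their combined product is the identity, upgrade the resulting chain of $\preceq$'s to equalities, and only then deploy the orbit hypothesis (i) to strip the exponent off $f_1^{\,k}$. The remaining ingredients --- the two easy implications, the Zorn argument, checking that the closures are $G$-invariant, and the unwinding of the transitive closure into cyclic chains --- are routine.
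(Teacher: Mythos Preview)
Your proof is correct and follows essentially the same route as the paper: Zorn's lemma to pass to a maximal $G$-invariant partial order, the two candidate transitive closures obtained by adjoining $\{(ga,gb)\}$ or $\{(gb,ga)\}$, the characterization of antisymmetry failure by a ``balanced'' family $b\preceq h_i a$ with $\prod h_i=e$ (this is exactly the paper's relation $\le_G$), and then the chain/walk argument together with torsion-freeness of $G/\operatorname{Stab}(b)$ to reach a contradiction. The only difference is packaging: the paper isolates the auxiliary relations $\le_G$ and $\sim_G$ into a standalone lemma (which it reuses for Corollary~\ref{cor:isect} and the preorder results), whereas you carry out the same computations directly inside the contradiction argument.
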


We will call (iii) the \textit{invariant order extension property}.

Theorem~\ref{th:ext0} yields a positive answer to de la Vega's question~\cite{delaVega} whether given an order automorphism $f$ of a partially ordered set $(X,\le)$, with $f$ having no finite orbits, $\le$ can be extended to a linear order $\le^*$ in such a way that $f$ is an order automorphism of $(X,\le^*)$. Just let $G$ be the group generated by $f$.

Both of the non-trivial implications in Theorem~\ref{th:ext0} are false for non-abelian groups. Any torsion-free group that is non-right-orderable~\cite{DPT,Promislow} acting on itself would provide a counterexample to (i)$\Rightarrow$(ii) while the fundamental group of the Klein bottle acting on itself would be a counterexample to (ii)$\Rightarrow$(iii)~\cite{DDHPV}.

For the proof of the theorem, define a relation  $\sim_G$ (or $\sim_{G,X}$ if we need to make $X$ clear) on $X$ by $x \sim_G y$ if and only if there is a $g\in G$ such that $g^n y = y$ for some $n\in\Z^+$ and $gy = x$.  Clearly $\sim_G$ is reflexive. To see that it is symmetric observe that if $g^n y = y$ and $gy = x$, then
$$
g^n x = g^{n+1} g^{-1} x = g^{n+1} y = g y = x,
$$
so $g^{-n} x = x$ and $x = g^{-1} y$.  If $G$ is abelian, $\sim_G$ is transitive. For if $g^m y = y$ and $gy = x$, and $h^n z = z$ and $hz = y$, then $(gh)z = x$ and
$$
(gh)^{mn+1} z = g g^{mn} h h^{mn}z=g g^{mn} h z = g g^{mn} y = gy = x.
$$

Also, given a $G$-invariant partial order $\le$, define the relation $\le_G$ by $x \le_G y$ if and only if there is a finite sequence $(g_i)_{i=1}^n$ in $G$ such that $x \le g_i y$ and $\prod_{i=1}^n g_i = e$.

Since in Theorem~\ref{th:ext0}, (iii)$\Rightarrow$(ii)$\Rightarrow$(i) is trivial, the theorem follows immediately from applying the following to a maximal $G$-invariant partial order on $X$ extending $\le$, which exists by Zorn, and obtaining a contradiction if that order is not linear.

\begin{prp}\label{prp:precise} Let $G$ be an abelian group acting freely on $X$. Let $\le$ be a $G$-invariant partial order.
If $\le$ is not a linear order and $G$ has no orbits of finite size greater than one, there exist $x$ and $y$ with $y\not\le_G x$ and $x\not\le y$.  Moreover, whenever $x$ and $y$ in $G$ are such that $y\not \le_G x$, then there is a $G$-invariant partial order $\le^*$ extending $\le$ such that $x \le^* y$.
\end{prp}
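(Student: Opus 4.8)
The plan is to prove the two assertions separately, since they rest on different ideas.

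For the first assertion, start from the failure of linearity to fix $a,b$ with $a\not\le b$ and $b\not\le a$, and then show that $a\le_G b$ and $b\le_G a$ cannot both hold: once that is done, either $b\not\le_G a$, so the pair $(x,y)=(a,b)$ works, or $a\not\le_G b$, so $(x,y)=(b,a)$ works. To rule out both, suppose $a\le_G b$ via $(g_i)_{i=1}^m$ and $b\le_G a$ via $(h_j)_{j=1}^n$; applying $g_i$ to $b\le h_j a$ and combining with $a\le g_i b$ gives $a\le(g_ih_j)a$ for all $i,j$. Now consider $S=\{g\in G:a\le ga\}$, a submonoid of $G$ containing $e$ (by reflexivity, $G$-invariance, and transitivity of $\le$). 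Since $G$ is abelian, the multiset $\{g_ih_j\}_{i,j}\subseteq S$ has product $\bigl(\prod_i g_i\bigr)^{n}\bigl(\prod_j h_j\bigr)^{m}=e$, so for any entry $k$ occurring with multiplicity $c\ge 1$, the product of the remaining entries is $k^{-c}$, whence $k^{-c}\in S$ and also $k^{c}\in S$. Then $a\le k^{c}a$ and, applying $k^{c}$ to $a\le k^{-c}a$, also $k^{c}a\le a$, so $k^{c}a=a$; since $c\ge 1$ the $k$-orbit of $a$ is finite, hence a singleton by the orbit hypothesis, hence $ka=a$, hence $k=e$ by freeness. Thus every $g_ih_j=e$, which forces all the $g_i$ to be a common element $g$ with $g^{m}=e$, so $g=e$ by the same orbit argument, so $a\le g_1 b=b$ --- contradicting $a\not\le b$.

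For the second assertion, given $x,y$ with $y\not\le_G x$, define $\le^{*}$ to be the transitive closure of $\le$ together with all pairs $(gx,gy)$, $g\in G$; concretely, $u\le^{*}v$ iff $u\le v$ or there exist $N\ge 1$ and $f_1,\dots,f_N\in G$ with $u\le f_1 x$, with $f_i y\le f_{i+1}x$ for $1\le i<N$, and with $f_N y\le v$. It is then routine that $\le^{*}$ is reflexive and transitive, contains $\le$, satisfies $x\le^{*}y$ (take $N=1$ and $f_1=e$), and is $G$-invariant (translate an entire chain by $h$, replacing each $f_i$ by $hf_i$). The only substantive point is antisymmetry: suppose $u\le^{*}v$ and $v\le^{*}u$ with $u\ne v$. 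They cannot both hold via $\le$ alone, so concatenating the two witnesses and absorbing any intervening $\le$-steps yields a cyclic system $\tilde f_1 y\le \tilde f_2 x,\ \tilde f_2 y\le \tilde f_3 x,\ \dots,\ \tilde f_N y\le \tilde f_1 x$ with $N\ge 1$. Applying $\tilde f_i^{-1}$ to the $i$-th relation gives $y\le(\tilde f_i^{-1}\tilde f_{i+1})x$ for all $i$ (indices read modulo $N$, with $\tilde f_{N+1}=\tilde f_1$), and the product $\prod_{i=1}^{N}\tilde f_i^{-1}\tilde f_{i+1}$ telescopes to $e$; hence $y\le_G x$, contrary to hypothesis. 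So $\le^{*}$ is a $G$-invariant partial order extending $\le$ with $x\le^{*}y$, as required.

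The hard part, I expect, will be the antisymmetry step in the second assertion: one must recognize that an arbitrary $\le^{*}$-cycle through $u$ and $v$ --- a priori an uncontrolled alternation of genuine $\le$-comparisons and forced $(gx,gy)$-comparisons --- can be repackaged \emph{exactly} into the data defining $y\le_G x$, so that the given hypothesis $y\not\le_G x$ bites. In the first assertion the corresponding crux is the step from the relations $a\le(g_ih_j)a$ --- holding for all $i,j$, with the product of the $g_ih_j$ equal to $e$ --- down to an honest finite orbit; this is where the orbit hypothesis and the freeness of the action are simultaneously consumed, and it relies on $G$ being abelian precisely so that that product is $e$.
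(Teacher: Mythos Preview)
Your argument is correct. For the second assertion your construction and antisymmetry proof are essentially identical to the paper's: define the transitive closure of $\le$ together with all forced pairs $(gx,gy)$, and observe that any failure of antisymmetry produces a cycle which, after dividing through by the $\tilde f_i$, witnesses $y\le_G x$ via a telescoping product.

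For the first assertion you take a genuinely different route. The paper appeals to its Lemma on $\le_G$: under the orbit hypothesis, $\le_G$ is antisymmetric (indeed $x\le_G y$ and $y\le_G x$ force $x\sim_G y$, and the orbit hypothesis makes $\sim_G$ trivial), so from $a\not\le b$, $b\not\le a$ one cannot have both $a\le_G b$ and $b\le_G a$. That argument never uses freeness. Your argument instead works directly with the submonoid $S=\{g:a\le ga\}$: from $g_ih_j\in S$ with total product $e$ you extract $k^c a=a$, invoke the orbit hypothesis to get $ka=a$, and then invoke freeness to get $k=e$; a second pass gives $g_i=e$ and the contradiction $a\le b$. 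This is correct, and the submonoid trick is a nice self-contained replacement for the paper's lemma, but it consumes the freeness hypothesis that the paper's proof does not need. (In fact you could avoid freeness too: once $g_ih_ja=a$ you get $g_ib\le g_ih_ja=a\le g_ib$, hence $a=g_ib$ for all $i$; then $g_1^{-1}g_i$ stabilizes $b$, so $e=\prod g_i$ lies in $g_1^m\cdot\mathrm{Stab}(b)$, giving $g_1^m b=b$, whence $g_1 b=b$ by the orbit hypothesis and $a=b$.) The paper's route via the $\le_G$/$\sim_G$ lemma is more economical and reusable; your route has the virtue of being self-contained within the proof of the proposition.
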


We now need to prove Proposition~\ref{prp:precise}.
Recall that $R$ is \textit{antisymmetric} provided that $xRy$ and $yRx$ implies $x=y$, so a partial order is an antisymmetric preorder.
We then need:

\begin{lem}\label{lem:prec}  Suppose $G$ is abelian and $\le$ is a $G$-invariant partial order. Then:
\begin{enumerate}
\item[\textup{(i)}] $\le_G$ is a $G$-invariant preorder extending $\le$.
\item[\textup{(ii)}] For all $x,y\in X$, the following are equivalent:
\begin{enumerate}
\item[\textup{(a)}] $x\sim_G y$
\item[\textup{(b)}] there is a finite sequence $(g_i)_{i=1}^n$ in $G$ such that $x = g_i y$, $1\le i\le n$, and $\prod_{i=1}^n g_i = e$
\item[\textup{(c)}] $x \le_G y$ and $y \le_G x$.
\end{enumerate}
\item[\textup{(iii)}] The following are equivalent:
\begin{enumerate}
\item[\textup{(a)}] $\le_G$ is antisymmetric
\item[\textup{(b)}] for all $x,y\in X$, $x \sim_G y$ implies $x=y$
\item[\textup{(c)}] no element of $G$ has any finite orbit of size greater than one.
\end{enumerate}
\end{enumerate}
\end{lem}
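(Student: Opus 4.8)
The plan is to treat the three parts in order and reduce part~(iii) to part~(ii); throughout I assume the witnessing sequences in the definition of $\le_G$ are required to be nonempty, since otherwise $\le_G$ would be all of $X\times X$. Part~(i) is routine: $\le_G$ extends $\le$ via the one-term sequence $(e)$, so it inherits reflexivity; for transitivity, given witnesses $(g_i)_{i=1}^n$ for $x\le_G y$ and $(h_j)_{j=1}^m$ for $y\le_G z$, apply each $g_i$ to $y\le h_j z$ to get $x\le g_i h_j z$ for all $i,j$, and note the $nm$-term family $(g_i h_j)$ has product $(\prod_i g_i)^m(\prod_j h_j)^n = e$ because $G$ is abelian, so it witnesses $x\le_G z$; for $G$-invariance, applying $g$ to $x\le g_i y$ gives $gx\le g_i(gy)$ with the same product, so $x\le_G y\Rightarrow gx\le_G gy$, and the converse follows by applying $g^{-1}$.

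For part~(ii) I would prove (a)$\Rightarrow$(b)$\Rightarrow$(a) and (b)$\Leftrightarrow$(c). For (a)$\Rightarrow$(b): if $g^n y = y$ with $n\ge 1$ and $gy = x$, the length-$n$ sequence consisting of $g$ taken $n-1$ times followed by $g^{1-n}$ does the job, since $g^{1-n}y = g(g^{-n}y) = gy = x$ and the product is $g^{n-1}g^{1-n} = e$ (for $n=1$ this degenerates to $(e)$ and one has $x = y$). For (b)$\Rightarrow$(a): from $g_i y = x$ for all $i$, set $k_i = g_1^{-1}g_i$, so each $k_i$ fixes $y$; abelianness gives $e = \prod_i g_i = g_1^n\prod_i k_i$, hence $g_1^n = (\prod_i k_i)^{-1}$ also fixes $y$, and together with $g_1 y = x$ this gives $x\sim_G y$. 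The implication (b)$\Rightarrow$(c) is immediate: $g_i y = x$ gives $x\le g_i y$, and $y = g_i^{-1}x$ gives $y\le g_i^{-1}x$, with $\prod_i g_i^{-1} = e$, so $x\le_G y$ and $y\le_G x$.

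The heart of the matter is (c)$\Rightarrow$(b). Given witnesses $(g_i)_{i=1}^n$ for $x\le_G y$ and $(h_j)_{j=1}^m$ for $y\le_G x$, applying $g_i$ to $y\le h_j x$ and chaining with $x\le g_i y$ yields $x\le g_i h_j x$ for all $i,j$. The set $S = \{k\in G : x\le kx\}$ contains $e$ and is closed under products (apply $k$ to $x\le k'x$), hence is a submonoid containing every $g_i h_j$. Since $G$ is abelian and $\prod_i g_i = \prod_j h_j = e$, the full product $\prod_{i,j}g_i h_j$ equals $e$, so for each fixed $(i_0,j_0)$ the element $(g_{i_0}h_{j_0})^{-1}$ is a product of the remaining $g_i h_j$'s and therefore also lies in $S$; thus $x\le (g_{i_0}h_{j_0})^{-1}x$, and applying $g_{i_0}h_{j_0}$ gives $g_{i_0}h_{j_0}x\le x$, whence antisymmetry of $\le$ forces $g_i h_j x = x$ for all $i,j$. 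Then for each $i$, applying $g_i$ to $y\le h_1 x$ gives $g_i y\le g_i h_1 x = x$, and with $x\le g_i y$ antisymmetry yields $g_i y = x$; since $\prod_i g_i = e$, this is exactly (b). I expect this telescoping-plus-antisymmetry argument---in particular the idea of passing to the submonoid $S$ and exploiting that $(g_{i_0}h_{j_0})^{-1}\in S$---to be the main obstacle.

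For part~(iii), (a)$\Leftrightarrow$(b) is immediate from the equivalence (a)$\Leftrightarrow$(c) in part~(ii): $\le_G$ fails to be antisymmetric exactly when there are distinct $x,y$ with $x\le_G y$ and $y\le_G x$, i.e.\ with $x\sim_G y$. For (b)$\Leftrightarrow$(c): if some $g$ has a finite orbit of size greater than one at a point $z$, then $g^d z = z$ for some $d\ge 1$ while $gz\ne z$, so $gz\sim_G z$ with $gz\ne z$, contradicting (b); conversely, if $x\sim_G y$ via $g$ with $g^d y = y$ ($d\ge 1$), the $g$-orbit of $y$ is finite, hence by (c) equal to $\{y\}$, so $x = gy = y$.
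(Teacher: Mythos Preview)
Your proof is correct and follows essentially the same route as the paper's. The only cosmetic differences are that for (ii)(b)$\Rightarrow$(a) you compute directly with $k_i=g_1^{-1}g_i$ where the paper passes to the quotient $G/G_y$ by the stabilizer, and for (ii)(c)$\Rightarrow$(b) you package the telescoping via the submonoid $S=\{k:x\le kx\}$ where the paper writes out the chain $x\le g_{i_1}y\le g_{i_1}h_{j_1}x\le\cdots\le\prod_{k}(g_{i_k}h_{j_k})x=x$ explicitly; the underlying ideas are identical.
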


\begin{proof}[Proof of Lemma~\ref{lem:prec}] (i): Invariance and  reflexivity are clear.
Suppose $x \le g_i y$, $1\le i\le m$, and $y \le h_j z$, $1\le j\le n$, with the product of the $g_i$ being $e$ and that of the
$h_j$ being $e$ as well.  Then $g_i y \le g_i h_j z$ by $G$-invariance of $\le$,
so $x \le g_i h_j z$,
and it is easy to see that the product of all
the $g_i h_j$ is $e$, so $x \le_G y$. Finally, if $x \le y$, then $x \le ey$ and so $x \le_G y$.

(ii)(a)$\Rightarrow$(b): Assume (a). Then $g^n y = y$ and $x=gy$ for some $n\in\Z^+$ and $g\in G$, so
$g^{-n} y = y$ and $x = g^{1-n} y$.  Let $g_1 = g^{1-n}$ and let $g_i = g$ for $2\le i\le n$.
Then $x = g_i y$ for all $i$ and the product of the $g_i$ is $e$.

(ii)(b)$\Rightarrow$(a): Suppose $(g_i)_{i=1}^n$ in $G$ are such that $x = g_i y$ and $\prod_{i=1}^n g_i = e$.
Let $G_y$ be the stabilizer of $y$, i.e., the subgroup $\{ g \in G : gy = y \}$.  We have $g_i^{-1} g_j y = g_i^{-1} x = y$ for all $i,j$, so the cosets $[g_i]=g_i G_y$ and $[g_j] = g_j G_y$ in $G/G_y$ are equal for all $i,j$.  Thus, $[g_1^n] = [\prod_{i=1}^n g_i] = e$, and so $g_1^n \in G_y$.  Thus, $g_1^n y = y$ and $g_1 y = x$, so $x\sim_G y$. (I am grateful to Friedrich Wehrung for drawing my attention to the stabilizer subgroups in connection with condition (ii)(b).)

(ii)(b)$\Rightarrow$(c): Suppose $x = g_i y$ where the product of the $g_i$ is $e$.
Thus $x\le g_i y$ for all $i$, and $x\le_G y$. Let $h_i=g_i^{-1}$.  Then $y = h_i x$, so $y\le h_i x$, and the product of the $h_i$ is $e$, so $y\le_G x$.

(ii)(c)$\Rightarrow$(b): Suppose $x \le_G y$ and $y\le_G x$.  Suppose thus $x \le g_i y$, $1\le i\le m$, and $y \le h_i x$, $1\le i\le n$, with $\prod_{i=1}^m g_i =\prod_{i=1}^n h_i = e$.  By invariance, we have $g_i y \le g_i h_j x$, for $i\le m$ and $j\le n$, so
\begin{equation}\label{eq:ggh}
x \le g_i y \le g_i h_j x.
\end{equation}
Fix $1\le i_1 \le m$.
Let $(i_k,i_k), 1\le k\le mn$, enumerate $([1,m]\cap\Z) \times ([1,n]\cap\Z)$.  Then by iterating
\eqref{eq:ggh} and using the invariance of $\le$:
$$    x \le g_{i_1} y \le g_{i_1} h_{i_1} x \le g_{i_1} h_{i_1} g_{i_2} h_{i_2} x \le \dotsb \\
    \le \prod_{k=1}^{mn} (g_{i_k} h_{i_k}) x = x.
$$
Thus, $x = g_{i_1} y$. But $i_1$ was arbitrary. Thus, $x = g_i y$ for all $i$, and so $x\sim_G y$.

(iii): The equivalence of (a) and (b) follows from (ii). An element $g$ has an orbit of finite size greater than $1$ if and only if there is an $x$ such that $gx \ne x$ but $g^n x = x$ for some $n$. The equivalence of (b) and (c) follows.
\end{proof}

\begin{proof}[Proof of Proposition~\upref{prp:precise}]
If $\le$ is not a linear order, there are $x$ and $y$ such that $x\not\le y$ and $y\not\le x$.  By the antisymmetry of $\le_G$ (from Lemma~\ref{lem:prec}), at least one of $y\not\le_G x$ or $x\not\le_G y$ must also hold.

Suppose now that $y\not\le_G x$.

Let $a\le^0 b$ providing that either $a\le b$ or there is a $g\in G$ such that $a=gx$ and $b=gy$.

Let $\le^*$ be the transitive closure of $\le^0$. Then $\le^*$ is $G$-invariant, reflexive, transitive and an extension of $\le$. We need only show $\le^*$ to be antisymmetric.

Since $\le^*$ is the transitive closure of $\le^0$ while $\le$ is antisymmetric and transitive, if $\le^*$ fails to be antisymmetric, by definition of $\le^0$, there will have to be a loop of the form:
$$
   g_1 x \le^0 g_1 y \le g_2 x \le^0 g_2 y \le \dotsb \le g_n x \le^0 g_n y \le g_1 x.
$$
Let $g_{n+1}=g_1$. Thus, $g_i y \le g_{i+1} x$ for $1\le i\le n$.
By $G$-invariance, $y \le g_{i}^{-1} g_{i+1} x$.
Let $h_i = g_{i}^{-1} g_{i+1}$, so $y \le h_i x$, and observe that $\prod_{i=1}^n h_i = e$. Thus,
$y\le_G x$ by Lemma~\ref{lem:prec}, contrary to what we have assumed.
\end{proof}

Proposition~\ref{prp:precise} also yields:

\begin{cor}\label{cor:isect}
If $G$ is an abelian group acting on a set $X$ with a $G$-invariant partial order $\le$, and no element of $G$ has a finite orbit of size greater than one, then $\le_G$ is the intersection of all $G$-invariant linear orders extending $\le$.
\end{cor}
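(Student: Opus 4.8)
The plan is to prove the two inclusions separately. First I would record that $\le_G$ is itself a $G$-invariant partial order extending $\le$: by Lemma~\ref{lem:prec}(i) it is a $G$-invariant preorder extending $\le$, and by part~(iii) of that lemma it is antisymmetric, since by hypothesis no element of $G$ has a finite orbit of size greater than one. I would also note that, by Theorem~\ref{th:ext0} applied (via (i)$\Rightarrow$(iii)) to $\le$ itself, the family of $G$-invariant linear orders extending $\le$ is nonempty, so the object on the right is genuinely an intersection of a nonempty collection of $G$-invariant partial orders extending $\le$. It then remains to prove that this intersection equals $\le_G$.

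The inclusion $\le_G\subseteq\le^*$, for any $G$-invariant linear order $\le^*$ extending $\le$, is where essentially all the work lies. Suppose $x\le_G y$, witnessed by $(g_i)_{i=1}^n$ with $x\le g_iy$ for each $i$ and $\prod_{i=1}^n g_i=e$. If $x\not\le^* y$ then $y<^* x$ by linearity, hence $g_iy<^* g_ix$ for every $i$ by $G$-invariance, and combining this with $x\le^* g_iy$ (which holds because $\le^*$ extends $\le$) gives $x<^* g_ix$ for every $i$. Pushing these strict inequalities along successive partial products — using $G$-invariance again — I would obtain the cycle
$$x<^* g_1x<^* g_1g_2x<^*\dotsb<^*\Bigl(\prod_{i=1}^n g_i\Bigr)x=x,$$
which contradicts the irreflexivity of $<^*$; hence $x\le^* y$. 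This closing-the-loop computation, which relies squarely on $\prod_{i=1}^n g_i=e$, is the step I expect to be the main (in fact, essentially the only) obstacle.

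For the reverse inclusion I would show that whenever $x\not\le_G y$ there is a $G$-invariant linear order extending $\le$ that omits the pair $(x,y)$. Since $\le_G$ is reflexive, $x\ne y$. Because $x\not\le_G y$, the second (``moreover'') assertion of Proposition~\ref{prp:precise}, applied with the two points in the opposite roles so that its hypothesis becomes exactly $x\not\le_G y$, furnishes a $G$-invariant partial order $\le'$ extending $\le$ with $y\le' x$. By Theorem~\ref{th:ext0} this $\le'$ extends to a $G$-invariant linear order $\le^*$, which still extends $\le$ and still has $y\le^* x$; antisymmetry of $\le^*$ together with $x\ne y$ then forces $x\not\le^* y$, so $(x,y)$ is not in this $\le^*$ and hence not in the intersection. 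Together with the previous paragraph this yields $\le_G=\bigcap\{\le^*:\le^*\text{ a }G\text{-invariant linear order extending }\le\}$.
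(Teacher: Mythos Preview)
Your proof is correct and follows the same two-inclusion strategy as the paper, invoking Proposition~\ref{prp:precise} (plus an extension to a linear order) for one direction and a telescoping chain for the other. Your chain argument for $\le_G\subseteq\le^*$ is in fact a slight streamlining of the paper's: by tracking the \emph{strict} inequality $y<^*x$ you obtain $x<^*g_ix$ and hence $x<^*x$ directly, whereas the paper carries only $y\le^*x$, concludes from the collapsed chain that $x=g_iy$ for every $i$, and then must invoke $\sim_G$ and the orbit hypothesis via Lemma~\ref{lem:prec}(iii) to force $x=y$.
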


\begin{proof} Proposition~\ref{prp:precise} and Zorn's lemma shows that if $y\not\le_G x$, then there is a $G$-invariant linear order $\le^*$ extending $\le$ and such that $x\le^* y$ and hence such that $y\not\le^* x$.
Thus the intersection of all $G$-invariant linear orders extending $\le$ is contained in $\le_G$.

For the other inclusion, we need to show that if $\le^*$ is a $G$-invariant linear order extending $\le$,
then $x\le_G y$ implies $x\le^* y$.

Suppose $x\le_G y$, so there are $(g_i)_{i=1}^n$ whose product is $e$ and which satisfy $x \le g_i y$.
To obtain a contradiction, suppose $x\not\le^* y$. Since $\le^*$ is linear, $x\ne y$ and $y\le^* x$.
Thus, $x \le g_i y \le^* g_i x$ for all $i$.  Hence, using the invariance of $\le^*$ and iteratively
applying $x\le^* g_i x$:
$$
   x \le g_1 y \le^* g_1 x \le^* g_1 g_2 x \le^* \cdots \le^* g_1 g_2 \cdots g_n x = x.
$$
Thus $x=g_1 y$. Reordering the $g_i$ as needed, we can prove that $x=g_i y$ for all $i$, and so $x\sim_G y$ and
hence $x=y$ by Lemma~\ref{lem:prec}, contrary to our assumptions.
\end{proof}

Note that if $G$ is a partially ordered torsion-free abelian group considered as acting on itself, then it is easy to see that $x \le_G y$ if and only if there is an $n\in\Z^+$ such that $x^n \le_G y^n$.  Thus, if $\le$ is a \textit{normal} order in the terminology of \cite{Fuchs}, i.e., one such that $0 \le y^n$ implies $0\le y$ (and hence $x^n\le y^n$ implies $x\le y$), then $\le_G$ coincides with $\le$, and Corollary~\ref{cor:isect} yields classic results~\cite{Everett,Fuchs} on extensions of partial orders on abelian groups.

\section{Preorders and orderings of subsets}
Even if $G$'s action on $X$ lacks the invariant order extension property, we can extend a partial order to a linear preorder (i.e., a preorder where all elements are comparable).  Of course this is trivially true: just take the preorder such that for all $x,y$ we have $x\le^* y$ and $y\le^* x$.  What is not trivially true is that if $G$ is any abelian group, we can extend the partial order to a preorder while preserving all the strict inequalities in the partial order.  In fact, this is even true if we start off with $\le$ a preorder.  Recall that $x < y$ is defined to hold if and only if $x \le y$ and not $y \le x$.

\begin{thm}\label{thm:preorder}
If $G$ is any abelian group acting on a space $X$, and $\le$ is a $G$-invariant preorder on $X$, then
there a $G$-invariant linear preorder $\le^*$ on $X$ that extends $\le$ and is such that if $x < y$,
then $x <^* y$.
\end{thm}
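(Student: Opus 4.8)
The plan is to reduce to the situation already handled in Proposition~\ref{prp:precise} by first collapsing the obstructions. Given the $G$-invariant preorder $\le$ on $X$, define $x \approx y$ to mean $x \le y$ and $y \le x$; since $\le$ is a $G$-invariant preorder, $\approx$ is a $G$-invariant equivalence relation. Pass to the quotient $Y = X/\!\approx$, on which $G$ acts and on which $\le$ descends to a $G$-invariant \emph{partial} order $\le_Y$, with $[x] <_Y [y]$ exactly when $x < y$. So if we can find a $G$-invariant linear preorder on $Y$ extending $\le_Y$ and preserving its strict part, pulling it back along $X \to Y$ gives what we want on $X$.

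Next I would kill the finite orbits. On $Y$, let $\simeq$ be the relation generated by: $[x] \simeq [y]$ whenever $[x] \sim_{G} [y]$ in the sense of the $\sim_G$ relation defined before Proposition~\ref{prp:precise} (adapted to the action on $Y$); more directly, declare $[x] \simeq [y]$ iff there is $g \in G$ with $g^n[y]=[y]$ for some $n \in \Z^+$ and $g[y]=[x]$. By Lemma~\ref{lem:prec}(ii), $\simeq$ coincides with the symmetric part of $\le_{G,Y}$, hence is transitive, and it is clearly $G$-invariant. The key point is that the quotient $Z = Y/\!\simeq$ carries a $G$-action with \emph{no} element of $G$ having a finite orbit of size greater than one: two distinct points of a finite orbit would be $\simeq$-equivalent and hence identified in $Z$. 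Moreover $\le_{G,Y}$ descends to a relation $\preceq$ on $Z$ which, by Lemma~\ref{lem:prec}, is a $G$-invariant partial order on $Z$ (antisymmetry is exactly the content of (iii), given that we have quotiented out $\simeq$).

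Now apply Theorem~\ref{th:ext0}, specifically the invariant order extension property (iii), to the abelian group $G$ acting on $Z$: since no element of $G$ has a finite orbit of size greater than one, the $G$-invariant partial order $\preceq$ extends to a $G$-invariant linear order $\preceq^*$ on $Z$. Pulling $\preceq^*$ back along the composite $X \to Y \to Z$ yields a $G$-invariant linear preorder $\le^*$ on $X$. It extends $\le$: if $x \le y$ then $x \le_Y y$ in the partial-order sense, so $[x]$ maps to something $\preceq$ (hence $\preceq^*$) the image of $[y]$, i.e.\ $x \le^* y$. The remaining thing to check — and the step I expect to be the main obstacle — is that strict inequalities are preserved: if $x < y$ we must rule out $y \le^* x$, i.e.\ rule out that $x$ and $y$ have the same image in $Z$. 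This is where we must verify that $x < y$ implies $[x] \not\simeq [y]$ in $Y$ — equivalently, by Lemma~\ref{lem:prec}(ii)(c)$\Leftrightarrow$(a), that $x<y$ precludes $[x] \le_{G,Y} [y]$ and $[y] \le_{G,Y} [x]$ both holding; since $x \le y$ already gives $[x]\le_{G,Y}[y]$, the content is that $[y]\le_{G,Y}[x]$ together with $[x]\le_Y[y]$ forces $[x]=[y]$, which follows by running the same telescoping-product computation as in the proof of Lemma~\ref{lem:prec}(ii)(c)$\Rightarrow$(b) — one chains $[x] \le_Y [y] \le_{?} [x]$ through the group elements witnessing $[y]\le_{G,Y}[x]$ and uses $G$-invariance and antisymmetry of $\le_Y$ to collapse the loop. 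Once that is in hand, $[x]\ne[y]$ in $Z$ is immediate, linearity of $\preceq^*$ gives a strict comparison there, and hence $x <^* y$, completing the proof.
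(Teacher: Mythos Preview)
Your overall architecture matches the paper's: first quotient by the symmetric part of the preorder to reduce to a partial order, then quotient by $\sim_G$ to kill finite orbits, then invoke Theorem~\ref{th:ext0} and pull back. The strict-inequality step you flag as ``the main obstacle'' is essentially the content of the paper's Lemma~\ref{lem:ord}, and your sketch of it is on the right track.

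However, there is a genuine gap earlier, at the sentence ``two distinct points of a finite orbit would be $\simeq$-equivalent and hence identified in $Z$.'' This argument only shows that a finite $g$-orbit \emph{in $Y$} collapses to a single point of $Z$. It does \emph{not} show that $Z$ itself has no finite $g$-orbits of size greater than one: an infinite $g$-orbit in $Y$ could, a priori, project to a nontrivial finite orbit in $Z$. Concretely, suppose $g^n\llbracket y\rrbracket=\llbracket y\rrbracket$ in $Z$ for some $n>1$ (writing $\llbracket\,\cdot\,\rrbracket$ for the class in $Z$); this says only that $g^n y \sim_{G,Y} y$, i.e.\ there exist $h\in G$ and $m\in\Z^+$ with $h^m y = y$ and $hy=g^n y$ in $Y$. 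From this you must deduce $gy\sim_{G,Y}y$, and that deduction is not immediate --- nothing in Lemma~\ref{lem:prec} gives it to you. The paper isolates exactly this step as Lemma~\ref{lem:quotient}, and its proof is a nontrivial (indeed computer-assisted) calculation: one manufactures four group elements $h_1,\dots,h_4$ each sending $y$ to $gy$, and finds explicit positive exponents $n_1,\dots,n_4$ with $h_1^{n_1}h_2^{n_2}h_3^{n_3}h_4^{n_4}=e$, whence $gy\sim_{G,Y}y$ via Lemma~\ref{lem:prec}(ii). Without that lemma or an equivalent argument, your appeal to Theorem~\ref{th:ext0} on $Z$ is unjustified.
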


The proof depends on two lemmas.

\begin{lem}\label{lem:quotient}
Suppose $G$ is an abelian group acting on a space $X$.  Let $Y=\q{X}{\sim_{G,X}}$ and extend the action of $g$
to $Y$ by $g[A]=[gA]$.  This is a well-defined group action and $G$ acting on $Y$ has the invariant order extension property.
\end{lem}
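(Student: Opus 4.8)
The plan is to dispose of the well-definedness and the group-action axioms quickly, and then to reduce the invariant order extension property for $G$ acting on $Y$ to Theorem~\ref{th:ext0} by checking its condition~(i) for the $G$-action on $Y$; this last check is the only substantial step. The starting observation is that $\sim_{G,X}$ is itself $G$-invariant: if $x\sim_{G,X}y$ is witnessed by $k\in G$ with $k^ny=y$ for some $n\in\Z^+$ and $ky=x$, then for any $h\in G$ we have $k(hy)=h(ky)=hx$ and $k^n(hy)=h(k^ny)=hy$ (here using that $G$ is abelian), so $hx\sim_{G,X}hy$; applying this with $h^{-1}$ gives the converse. Hence for each $g\in G$ the setwise image $gA=\{ga:a\in A\}$ of an equivalence class $A$ is again an equivalence class, so $g[x]:=[gx]$ is a well-defined function $Y\to Y$, and the identities $e[x]=[x]$ and $(gh)[x]=[g(hx)]=g(h[x])$ show that this is a group action.

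The crux is the claim that if $g^ny\sim_{G,X}y$ for some $g\in G$, $y\in X$ and $n\in\Z^+$, then already $gy\sim_{G,X}y$. To prove it, unpack $g^ny\sim_{G,X}y$: there are $h\in G$ and $m\in\Z^+$ with $h^my=y$ and $hy=g^ny$. A short induction on $j$, using commutativity, gives $h^jy=g^{jn}y$ for all $j\ge0$ (the inductive step being $h^{j+1}y=h(g^{jn}y)=g^{jn}(hy)=g^{(j+1)n}y$), so with $j=m$ we get $y=h^my=g^{mn}y$. Since $mn\in\Z^+$ and $gy=gy$ trivially, this says exactly that $gy\sim_{G,X}y$.

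Given the claim, the rest is short: suppose some $g\in G$ had an orbit of finite size greater than one in $Y$; then (as in the proof of Lemma~\ref{lem:prec}(iii)) there are $z\in X$ and $k\in\Z^+$ with $g[z]\ne[z]$ and $g^k[z]=[z]$. Unwinding the action on $Y$, the latter equality says $g^kz\sim_{G,X}z$, so by the claim $gz\sim_{G,X}z$, i.e.\ $g[z]=[gz]=[z]$, a contradiction. Hence no element of $G$ has a finite orbit of size greater than one in $Y$; that is, $G$ acting on $Y$ satisfies condition~(i) of Theorem~\ref{th:ext0}, and by the implication (i)$\Rightarrow$(iii) of that theorem, $G$ acting on $Y$ has the invariant order extension property.

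The one nonroutine step is the inductive identity $h^jy=g^{jn}y$ in the proof of the claim, and it is precisely there that the abelian hypothesis enters; I expect that to be the main obstacle, everything else being bookkeeping. (One could instead run the last paragraph through the relation $\sim_{G,Y}$ and the equivalence (iii)(b)$\Leftrightarrow$(iii)(c) of Lemma~\ref{lem:prec}, but the direct route seems shortest.)
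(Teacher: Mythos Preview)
Your proof is correct, and in fact takes a noticeably cleaner route than the paper's.

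Both proofs handle well-definedness the same way, and both ultimately appeal to Theorem~\ref{th:ext0} (via one of the equivalent conditions of Lemma~\ref{lem:prec}(iii)) to conclude the invariant order extension property for $Y$. The difference is in \emph{which} equivalent condition is verified. The paper checks condition~(iii)(b) of Lemma~\ref{lem:prec} directly for $Y$: assuming $[x]\sim_{G,Y}[y]$, it unpacks this to $fy\sim_{G,X}x$ and $f^m y\sim_{G,X}y$, then unpacks \emph{both} of these $\sim_{G,X}$-relations to get auxiliary elements $g,h$ and integers $n,p$, produces four elements $h_1,\dots,h_4$ with $x=h_i y$, and finally has to find positive exponents $n_1,\dots,n_4$ with $\prod h_i^{n_i}=e$ (the paper remarks that these exponents were found by computer). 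You instead verify the orbit condition~(iii)(c) directly: it suffices to show that $g^n z\sim_{G,X}z$ implies $gz\sim_{G,X}z$, and this requires unpacking only \emph{one} $\sim_{G,X}$-relation. Your inductive identity $h^jy=g^{jn}y$ then immediately gives $g^{mn}y=y$, whence $gy\sim_{G,X}y$ with $g$ itself as the witness. This bypasses the computer-assisted exponent search entirely. What the paper's approach buys is perhaps a more explicit picture of why $\sim_{G,Y}$ collapses, but your argument is shorter and more transparent, and the abelian hypothesis is used in exactly one commutation step.
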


\begin{proof}
That the group action is well-defined follows from the fact that $x \sim_{G,X} y$ if and only if $gx \sim_{G,X} gy$, for any $x,y\in X$ and $g\in G$.

Suppose that $[x] \sim_{G,Y} [y]$ for $x,y\in X$.
Choose $f\in G$ and $m\in \Z+$ such that $f[y] = [x]$ and $f^m [y] = [y]$.
Without loss of generality assume $m\ge 3$.
Thus, $x \sim_{G,X} fy$ and $y \sim_{G,X} f^m y$.
Hence there are $g,h\in G$ and $n,p\in\Z^+$ such that
$gfy = x$, $g^n fy = fy$, $hf^m y = y$ and $h^p f^m y = f^m y$. Without loss of generality assume
$n\ge 3$.

  Thus, $y= g^{-n} y$, $y = f^{-m}h^{-1} y$ and $y = h^p y$.  Since $x=fgy$, we have $x = h_i y$, for $1\le i\le 4$, where:
\begin{align*}
h_1 &= fg\\
h_2 &= f g^{1-n}\\
h_3 &= f^{1-m} g h^{-1}\\
h_4 &= f g h^p.
\end{align*}
Let $n_1 = m(n-1)p - n(p+1)$, $n_2 = mp$, $n_3 = np$ and $n_4 = n$ (the values were generated by computer).  Given that $m\ge 3$ and $n\ge 3$, we have $n_1\ge 0$. Straightforwardly we have $h_1^{n_1} h_2^{n_2} h_3^{n_3} h_4^{n_4} = e$.  Then let the $g_i$ be a sequence of $n_1+n_2+n_3+n_4$ entries from $G$, with the first $n_1$ being all equal to $h_1$, the next $n_2$ being $h_2$, the next $n_3$ being $h_3$ and the rest being $h_4$.
Then $x = g_i y$ and the product of the $g_i$ is $e$, so $x\sim_{G,X} y$.
Thus $[x]=[y]$ and so we have the invariant order extension property.
\end{proof}

\begin{lem}\label{lem:ord}
Suppose $G$ is an abelian group acting on a space $X$ and $\le$ is a $G$-invariant partial order on $G$.
\begin{enumerate}
\item[\textup{(i)}] If $x < y$, then we do not have $x\sim_G y$
\item[\textup{(ii)}] If $x\sim_G x'$, $y\sim_G y'$ and $x\le y$, then $x' \le_G y'$
\item[\textup{(iii)}] If $x < y$ and $x\sim_G x'$ and $y\sim_G y'$, then we do not have $y' \le x'$.
\end{enumerate}
\end{lem}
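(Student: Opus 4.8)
The plan is to prove the three items in order, using the characterizations of $\sim_G$ and $\le_G$ from Lemma~\ref{lem:prec} together with the $G$-invariance and antisymmetry of $\le$. For (i), I would argue by contradiction: if $x<y$ and $x\sim_G y$, then by Lemma~\ref{lem:prec}(ii), $x\le_G y$ and $y\le_G x$; but $x\le y$ already gives $x\le_G y$ directly, and the real content is to extract $y\le x$ from $y\sim_G x$ so as to contradict $x<y$. Actually the cleanest route: by Lemma~\ref{lem:prec}(ii)(a)$\Rightarrow$(b) there is a sequence $(g_i)_{i=1}^n$ with $x=g_i y$ for all $i$ and $\prod g_i=e$. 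Since $x\le y$, invariance gives $g_i x\le g_i y=x$, i.e.\ $g_i x\le x$ for every $i$; chaining these, $x=\bigl(\prod g_i\bigr)x\le\dots\le g_1 x\le x$, so in fact $g_1 x=x$, hence $y=g_1^{-1}x=x$, contradicting $x<y$. (Equivalently one notes $x\sim_G y$ forces $x\le y$ and $y\le x$ by applying invariance to $x=g_i y$ both ways, then antisymmetry gives $x=y$.)

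For (ii), suppose $x\sim_G x'$, $y\sim_G y'$ and $x\le y$. Using Lemma~\ref{lem:prec}(ii)(b), pick $(a_i)_{i=1}^m$ with $x'=a_i x$, $\prod a_i=e$, and $(b_j)_{j=1}^p$ with $y'=b_j y$, $\prod b_j=e$. From $x\le y$ and invariance, $a_i b_j x\le a_i b_j y$, and since $a_i x=x'$ for each $i$ this rearranges (using abelianness) to show $x'\le (\text{something})\,y'$ with the products working out to $e$. Concretely I expect to exhibit a finite family $(c_k)$ with $x'\le c_k y'$ and $\prod c_k=e$ by taking $c_k$ to run over all products $a_i b_j^{-1}$ as $(i,j)$ ranges over $[1,m]\times[1,p]$: indeed $x'=a_i x\le a_i y = (a_i b_j^{-1})(b_j y)=(a_i b_j^{-1})y'$, and $\prod_{i,j} a_i b_j^{-1}=\bigl(\prod a_i\bigr)^p\bigl(\prod b_j\bigr)^{-m}=e$. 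Hence $x'\le_G y'$ by definition of $\le_G$.

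For (iii), suppose $x<y$, $x\sim_G x'$, $y\sim_G y'$, and assume toward a contradiction that $y'\le x'$. Applying (ii) with the roles $(y',x')$ in place of $(x,y)$ and $(y,x)$ in place of $(x',y')$ — legitimate since $\sim_G$ is symmetric — we get $y\le_G x$. On the other hand $x\le y$ gives $x\le_G y$ via Lemma~\ref{lem:prec}(i). So $x\le_G y$ and $y\le_G x$, whence $x\sim_G y$ by Lemma~\ref{lem:prec}(ii)(c)$\Rightarrow$(a). But then $x=y$ by part (i) (or directly contradicts (i)), contradicting $x<y$.

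I expect the main obstacle to be part (ii): one must be careful that the $\le_G$ relation (as opposed to $\le$) is the right target — a naive attempt to conclude $x'\le y'$ outright will fail, since $\sim_G$-equivalent elements need not be $\le$-comparable to the same things — and one must check that the product of the chosen group elements $c_k$ really collapses to $e$, which is exactly where commutativity of $G$ is used. Parts (i) and (iii) are then short deductions from (ii) and Lemma~\ref{lem:prec}.
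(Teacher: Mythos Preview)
Your proposal is correct and matches the paper's argument closely. Part (ii) is essentially identical to the paper (the paper writes $x=g_i x'$, $y=h_j y'$ and uses the family $g_i^{-1}h_j$, which is your $a_i b_j^{-1}$ after inverting); part (iii) is the same deduction up to whether one first lands on $x'\sim_G y'$ (paper) or $y\le_G x$ (you) before invoking Lemma~\ref{lem:prec}(ii) and part (i). The only noticeable variation is in (i): the paper works directly from the definition of $\sim_G$ with a single $g$ satisfying $gy=x$, $g^n y=y$ and produces a \emph{strict} chain $y>gy>g^2y>\cdots>g^n y=y$, whereas you pass through the characterization (b) of Lemma~\ref{lem:prec}(ii) and collapse a non-strict chain by antisymmetry --- both reach $x=y$ and the difference is purely cosmetic.
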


\begin{proof}[Proof of Lemma~\upref{lem:ord}]
(i): Suppose $x < y$.  To obtain a contradiction, suppose $x\sim_{G} y$, so $gy = x$ and $g^n y = y$ for
some $n$ and $g$. By invariance, $g^k y > g^k x$ for all $k$. Thus:
$$
   y > x = gy >  gx = g^2 y > \dotsb > g^{n-1} x = g^n y = y,
$$
a contradiction.

(ii): If $x\sim_G x'$ and $y\sim_G y'$, then by Lemma~\ref{lem:prec} there are $(g_i)_{i=1}^m$, with product $e$, and $(h_i)_{i=1}^n$, with product $e$, such that $x = g_i x'$ and $y=h_j y'$.
Thus, $g_i x' \le h_j y'$ and by $G$-invariance of $\le$, we have $x' \le g_i^{-1} h_j y'$.  The product of the $g_i^{-1} h_j$, as $(i,j)$ ranges over $([1,m]\cap \Z)\times ([1,n]\cap \Z)$, is $e$, so $x' \le_G y'$.

(iii): Now suppose that $x < y$, $x\sim_G x'$ and $y\sim_G y'$.
Then $x' \le_G y'$ by (ii).  To obtain a contradiction, suppose $y'\le x'$.  So $y'\le_G x'$. Thus, $x'\sim_G y'$ by Lemma~\ref{lem:prec}. Since $\sim_G$ is an equivalence relation,
$x\sim_G y$, which contradicts $x<y$ by (i).
\end{proof}

\begin{proof}[Proof of Theorem~\upref{thm:preorder}]
First note that we only need to prove the result for $\le$ a partial order.  For if $\le$ is a preorder, then we can replace $X$ by $\q{X}{\simeq}$ where $x\simeq y$ if and only if $x\le y$ and $y\le x$.  Define the natural group action of $G$ by $g[x]_\simeq = [gx]_\simeq$ and note that stipulating that $[x]_\simeq\preceq [y]_\simeq$ if and only if $x\le y$ gives a well-defined $G$-invariant partial order.  The partial order version of the theorem then yields a linear preorder extending $\preceq$, which lifts to a linear preorder on $X$ satisfying the required conditions.

Suppose thus that $\le$ is a $G$-invariant partial order on $X$.  For $a,b\in Y=\q{X}{\sim_{G,X}}$, let $a\le^0 b$ if and only if there are representatives $x\in a$ and $y\in b$ such that $x \le y$.

Clearly, $\le^0$ is reflexive and $G$-invariant.  Suppose that $a\le^0 b$ and $b\le^0 c$. Choose $x\in a$, $y_1,y_2\in b$ and $z\in c$ such that $x\le y_1$ and $y_1\le z$.  Since $y_1\sim_G y_2$, by Lemma~\ref{lem:prec} we have $y_1\le y_2$, so $x\le z$ and $a\le^0 c$.

We now check that $\le^0$ is antisymmetric.  Suppose $a \le^0 b$ and $b\le^0 a$.
Thus there are representatives $x,x'\in a$ and $y,y'\in b$ such
that $x \le y$ and $y'\le x'$.  If $x=y$, we have $a=b$ as desired.
Otherwise, $x<y$.
Moreover, $x\sim_{G,X} x'$ and $y\sim_{G,X} y'$. But that would contradict Lemma~\ref{lem:ord}(iii).

Thus $\le^0$ is a partial order.
By Lemma~\ref{lem:quotient} and Theorem~\ref{th:ext0},
extend it to a $G$-invariant linear order $\le^1$ on $Y$.  Now let $x \le^* y$ if and only if $[x] \le^1 [y]$.  This is a $G$-invariant linear preorder.

Suppose $x < y$.  We then have $[x]\le^1 [y]$.  Thus $x \le^* y$.  To complete our proof, we must show
$y\not\le^* x$.  By Lemma~\ref{lem:ord}(i), we do not have $x \sim_{G,X} y$, and so $[x]\ne [y]$.
Since $\le^1$ is a partial order, $[y]\not\le^1 [x]$ and so $y\not\le^* x$.  Thus $x <^* y$.
\end{proof}

\begin{cor}\label{cor:comp} Suppose $G$ is an abelian group acting on a space $X$. Then there is a $G$-invariant linear preorder $\le$ on the powerset $\Power X$ such that if $A$ is a proper subset of $B$, then $A < B$.
\end{cor}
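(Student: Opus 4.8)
The plan is to obtain this as an immediate application of Theorem~\ref{thm:preorder} to the inclusion order on $\Power X$. First I would equip $\Power X$ with the natural action $gA=\{gx:x\in A\}$. This is a genuine group action: $g(hA)=(gh)A$ and $eA=A$, using only that each $g$ acts on $X$ as a bijection with inverse $g^{-1}$. Next I would check that set inclusion $\subseteq$ is a $G$-invariant partial order on $\Power X$. Reflexivity, transitivity and antisymmetry are the usual properties of $\subseteq$, and $G$-invariance is clear: if $A\subseteq B$ then every element of $gA$ has the form $gx$ with $x\in A\subseteq B$, hence lies in $gB$; conversely $gA\subseteq gB$ gives $A=g^{-1}(gA)\subseteq g^{-1}(gB)=B$. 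So $A\subseteq B$ if and only if $gA\subseteq gB$ for every $g\in G$.

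Then Theorem~\ref{thm:preorder}, applied with $X$ replaced by $\Power X$ and $\le$ replaced by the $G$-invariant partial order $\subseteq$, produces a $G$-invariant linear preorder $\le$ on $\Power X$ that extends $\subseteq$ and satisfies: $A<B$ (in the $\subseteq$-sense) implies $A<B$ (in the $\le$-sense). The only thing to observe is that the strict relation associated with $\subseteq$ is exactly proper inclusion: since $\subseteq$ is antisymmetric, ``$A\subseteq B$ and not $B\subseteq A$'' holds precisely when $A$ is a proper subset of $B$. Hence $A\subsetneq B$ yields $A<B$ in $\le$, which is the assertion of the corollary.

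I do not expect a genuine obstacle here; all of the content has already been carried out, namely in Theorem~\ref{thm:preorder} (and, through it, in Theorem~\ref{th:ext0}, Lemma~\ref{lem:quotient} and Lemma~\ref{lem:ord}). The corollary is simply the remark that the inclusion order on $\Power X$ is a $G$-invariant partial order, so that this machinery applies verbatim. The only conceptual point worth flagging is that we cannot in general upgrade $\le$ to a linear \emph{order}: nothing forces $\sim_{G}$-equivalent subsets to be equal, so distinct sets may end up $\le$-equivalent, and a linear preorder is the best one can hope for at this level of generality.
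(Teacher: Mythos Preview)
Your proposal is correct and is exactly the intended argument: the paper states the corollary without proof, treating it as immediate from Theorem~\ref{thm:preorder} applied to the induced $G$-action on $\Power X$ with $\subseteq$ as the $G$-invariant partial order. Your verification of the details (that the induced action is a group action, that $\subseteq$ is $G$-invariant, and that the strict part of $\subseteq$ is proper inclusion) fills in precisely what the paper leaves implicit.
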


In particular, there is a translation-invariant ``size comparison'' for subsets of $\R^n$ for all $n$ as well as a rotationally-invariant ``size comparison'' for subsets of the circle $\T$ that preserves the intuition that proper subsets are ``smaller''.

Corollary~\ref{cor:comp} is not true in general for non-abelian $G$, even in the case of isometry groups that are ``very close'' to abelian.  For instance, suppose $G$ is all isometries on the line $\R$.  This has the translations as a subgroup of index two and is supramenable, i.e., for every non-empty subset $A$ of any set $X$ that it acts on, there is a finitely-additive $G$-invariant measure $\mu$ of $X$ with $\mu(A)=1$~\cite[Chapter~12]{Wagon}.  But we shall shortly see that there is no $G$-invariant preorder $\le$ on $\Power\R$ such that $A<B$ whenever $A$ is a proper subset of $B$.

To see this, say that a preorder $\le$ is strongly $G$-invariant provided that $x\le y$ if and only if $gx \le y$ if and only if $x\le gy$ for all $g\in G$ and $x,y\in X$.
Then there is no \textit{strongly} $G$-invariant preorder $\le$ on $\Power\R$ such that $A\subset B$ implies $A < B$, since if $\le$ were such a preorder, then we would have $\Z^+ < \Z_0^+$ even though $1+\Z^+ = \Z_0^+$.

But it turns out that if $G$ is all isometries on $\R$, then invariance implies strong invariance, and so there is no invariant $G$-invariant preorder on $\Power\R$ which preserves strict inclusion.  For the isometry group $G$ is generated by elements of finite order, namely reflections, and elements of finite order have finite orbits, while:

\begin{prp}\label{prp:gen} If $\le$ is a $G$-invariant linear preorder on $X$, and $G$ is any group generated by elements all of whose
orbits are finite, then $\le$ is strongly $G$-invariant.
\end{prp}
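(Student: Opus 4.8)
The plan is to reduce the statement to the generators of $G$ and then to show that each such generator acts \emph{trivially} as far as $\le$ can detect. Write $x\simeq y$ for ``$x\le y$ and $y\le x$''. The crux will be that every generator $g$ satisfies $x\simeq gx$ for all $x\in X$, after which strong invariance drops out by transitivity.

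First I would check that the set
$$
  H=\{\, g\in G : x\le y \iff gx\le y \iff x\le gy \text{ for all } x,y\in X \,\}
$$
is a subgroup of $G$. It contains $e$; it is closed under inverses, since applying the defining property of $g\in H$ to the pair $(g^{-1}x,y)$ gives $g^{-1}x\le y\iff x\le y$, and similarly on the right; and it is closed under products, since for $g,h\in H$ and any $x,y$ we have $x\le y\iff hx\le y\iff g(hx)\le y$ and $x\le y\iff x\le hy\iff x\le g(hy)$, using $g(hx)=(gh)x$ and $g(hy)=(gh)y$. Since $G$ is generated by elements all of whose orbits are finite, it then suffices to show that every such element lies in $H$.

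So fix $g\in G$ with all orbits finite, fix $x\in X$, and choose $n\in\Z^+$ with $g^nx=x$. The key claim is $x\simeq gx$. To see $x\le gx$, suppose not; then by totality $gx\le x$ while $x\not\le gx$, and applying the $G$-invariance of $\le$ repeatedly gives $g^kx\le g^{k-1}x$ for $1\le k\le n$, so $x=g^nx\le g^{n-1}x\le\dots\le gx$, whence $x\le gx$ by transitivity --- a contradiction. Thus $x\le gx$, and applying $G$-invariance again gives $x\le gx\le g^2x\le\dots\le g^nx=x$, so also $gx\le x$; that is, $x\simeq gx$. The same argument applied to $y$ gives $y\simeq gy$.

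It remains to deduce $g\in H$. If $x\le y$, then $gx\le x\le y$ and $x\le y\le gy$, so $gx\le y$ and $x\le gy$; conversely, $gx\le y$ forces $x\le gx\le y$, and $x\le gy$ forces $x\le gy\le y$. Hence $x\le y\iff gx\le y\iff x\le gy$, so $g\in H$, and the proof is complete. The only step that is not pure bookkeeping is the chain argument establishing $x\simeq gx$; that is where all the hypotheses --- totality, transitivity, $G$-invariance, and finiteness of the orbit of $g$ through $x$ --- are used together, so I expect it to be the main (mild) obstacle.
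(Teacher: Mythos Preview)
Your proof is correct and follows essentially the same route as the paper: the heart of both arguments is the chain $x\le gx\le g^2x\le\dots\le g^nx=x$ (or its reverse) coming from $G$-invariance, totality, and finiteness of the orbit, which forces $x\simeq gx$ for each generator $g$. The paper compresses this to the single implication $gx\le x$ (hence $gx\le y$ whenever $x\le y$) and leaves the reduction to generators implicit, whereas you spell out the subgroup $H$ and the full equivalence $x\simeq gx$; but the underlying idea is the same.
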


\begin{proof} We only need to prove that if $g\in G$ has only finite orbits, then $x\le y$ implies $gx\le y$.
Suppose $x\le y$ and $g^n x = x$.
By linearity, we have $x \le gx$ or $gx \le x$ (or both).  If $x \le gx$, then $g^k x \le g^{k+1} x$ for all $k$
by invariance, and so
$$
    x \le gx \le g^2 x \le \dotsb \le g^n x = x
$$    
and so $gx\le x$.  So in either case, $gx\le x$. By transitivity, $x\le y$ implies $gx\le y$.
\end{proof}

The following generalizes the remarks about the isometries on $\R$:

\begin{cor} If $G$ is any group acting on a set $X$ and there are $g,h\in G$ with only finite orbits, while
$gh$ has at least one infinite orbit, then there is no $G$-invariant preorder $\le$ on $\Power X$ such that if $A$ is a proper subset of $B$, then $A<B$.
\end{cor}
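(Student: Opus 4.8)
The plan is to argue by contradiction: suppose $\le$ is a $G$-invariant linear preorder on $\Power X$ with $A<B$ whenever $A\subsetneq B$, and write $S\simeq T$ to mean $S\le T$ and $T\le S$. The contradiction will come from exhibiting a set $B$ that $gh$ properly shrinks, but which $g$ and $h$ each merely permute ``periodically'', so that by linearity $B$ and $(gh)B$ are forced to be equivalent — contradicting that $(gh)B$ is a proper subset of $B$.

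First I would isolate the mechanism behind Proposition~\ref{prp:gen}: if $w\in G$ and $S\in\Power X$ satisfy $w^nS=S$ for some $n\ge1$, then $S\simeq wS$. Indeed, by linearity either $S\le wS$ or $wS\le S$; in the first case $G$-invariance gives $S\le wS\le w^2S\le\dots\le w^nS=S$, so every comparison in this cycle is an equivalence; the second case follows by applying the first to $w^{-1}$, which also fixes $S$. A short induction on word length then upgrades this to: if $B\in\Power X$ satisfies $g^aB=B$ and $h^bB=B$ for some $a,b\ge1$, then $wB\simeq B$ for every $w\in\langle g,h\rangle$. (Write $w=u_1u_2\cdots u_k$ with each $u_i\in\{g^{\pm1},h^{\pm1}\}$; then $u_2\cdots u_kB\simeq B$ by induction, so applying $u_1$ and using invariance gives $wB\simeq u_1B$, while $u_1B\simeq B$ by the displayed fact, since $u_1^a$ or $u_1^b$ fixes $B$.)

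Next I would build such a $B$. Choose $x_0$ with an infinite $gh$-orbit. The hypothesis that $g$ and $h$ have only finite orbits enters here: since $X=\bigcup_{a,b\ge1}\bigl(\operatorname{Fix}(g^a)\cap\operatorname{Fix}(h^b)\bigr)$, one wants a point $x_0$ whose forward $gh$-orbit lies inside a single $\operatorname{Fix}(g^a)\cap\operatorname{Fix}(h^b)$ — equivalently, one passes to a $\langle g,h\rangle$-invariant subset of $X$, still carrying an infinite $gh$-orbit, on which $g$ and $h$ have finite order (in the motivating case where $g,h$ are reflections generating the isometries of $\R$ this is automatic, and no passage is needed). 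Granting this, put $B:=\{(gh)^nx_0:n\ge0\}$. Then $g^aB=B$ and $h^bB=B$ (in fact $g^a$ and $h^b$ fix $B$ pointwise), while $(gh)B=\{(gh)^nx_0:n\ge1\}\subsetneq B$ because the orbit of $x_0$ is infinite.

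Finally, applying the second paragraph with $w=gh$ yields $(gh)B\simeq B$, hence $B\le(gh)B$; but $(gh)B\subsetneq B$ forces $(gh)B<B$, i.e.\ $B\not\le(gh)B$ — the desired contradiction. I expect the crux to be the construction in the third paragraph: squeezing out of ``$g,h$ have finite orbits and $gh$ has an infinite orbit'' an infinite $gh$-orbit along which the $g$- and $h$-orbit sizes are bounded, so that the natural candidate $B$ is genuinely $g^a$- and $h^b$-periodic; once $B$ is in hand, everything else is a routine consequence of linearity and $G$-invariance.
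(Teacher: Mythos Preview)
Your overall strategy matches the paper's exactly: take a forward half-orbit $B=\{(gh)^n x_0:n\ge 0\}$ of a point with infinite $gh$-orbit, observe $(gh)B\subsetneq B$ so that $(gh)B<B$, and then derive the contradiction $(gh)B\simeq B$ from the finite-orbit hypothesis on $g$ and $h$. The paper does the last step in one line: after reducing to $G=\langle g,h\rangle$, it invokes Proposition~\ref{prp:gen} to conclude that any $G$-invariant linear preorder on $\Power X$ is strongly $G$-invariant, and strong invariance gives $(gh)B\simeq B$ immediately. It never passes through your intermediate requirement that $g^aB=B$ and $h^bB=B$ for the particular set $B$.

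The step you single out as the crux---finding an infinite $gh$-orbit lying inside a single $\operatorname{Fix}(g^a)\cap\operatorname{Fix}(h^b)$---is a genuine gap in your write-up, and your suggested fix (pass to a $\langle g,h\rangle$-invariant subset on which $g$ and $h$ have finite order) is not justified: for non-abelian $\langle g,h\rangle$ the sets $\operatorname{Fix}(g^a)\cap\operatorname{Fix}(h^b)$ need not be $\langle g,h\rangle$-invariant, and nothing in the hypotheses bounds the $g$- and $h$-periods along an infinite $gh$-orbit. You are right, however, that this is exactly where the content lies: Proposition~\ref{prp:gen}, read literally, requires finite orbits of the generators on the preordered set $\Power X$, which does \emph{not} follow from finite orbits on $X$ (e.g.\ let $g$ act as $+1$ on $\bigsqcup_{n\ge 2}\Z/n\Z$ and take $S=\{0_n:n\ge 2\}$). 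So your detour is not gratuitous---it is probing a point the paper's one-line citation glosses over---but as written it is incomplete.
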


\begin{proof} Without loss of generality, $G$ is generated by $g$ and $h$. Let $A$ be an infinite orbit of
$gh$, fix $x\in A$ and let $A^+ = \{ (gh)^n x : n \in \Z_0^+ \}$. Then $ghA^+$ is a proper subset of $A^+$, and there is no strongly $G$-invariant preorder $\le$ on $\Power X$ such that $ghA^+<A^+$. By Proposition~\ref{prp:gen}, there is no $G$-invariant preorder like that, either.
\end{proof}

\end{document}